\newtheorem{theorem}{Theorem}[section] 
\newtheorem{lemma}[theorem]{Lemma} 
\newtheorem{proposition}[theorem]{Proposition} 
\newtheorem{corollary}[theorem]{Corollary} 
\newtheorem{remark}[theorem]{Remark}
\newenvironment{proof}{\noindent\text{\textbf{Proof.\:}}}{}
\newcommand{\N}{\mathbb{N}}
\newcommand{\Z}{\mathbb{Z}}
\newcommand{\R}{\mathbb{R}}
\newcommand{\T}{\mathbb{T}}
\renewcommand{\S}{\mathbb{S}}
\newcommand{\C}{\mathcal{C}}
\newcommand{\Om}{\Omega}
\def\Om{\Omega}
\def\te{\theta}
\def\qed{\hfill $\square$ \goodbreak \smallskip}
\def\eps{\varepsilon}
\title{On the local minimizers \\ of the Mahler volume}
\author{E.~M. Harrell\\
School of Mathematics\\ Georgia Institute of Technology\\
Atlanta, GA 30332-0160\\
{\tt harrell@math.gatech.edu}
\and
A. Henrot\\
Institut \'Elie Cartan Nancy\\
UMR 7502,
Nancy Universit\'e - CNRS - INRIA\\
B.P. 239 54506 Vandoeuvre les Nancy Cedex,  France\\
{\tt henrot@iecn.u-nancy.fr}
\and
 J. Lamboley\\
CEREMADE\\ Universit\'e Paris-Dauphine\\ Place du Mar\'echal de Lattre de Tassigny\\
 75775 Paris, France}
\date{\today}
\begin{document}
\maketitle
\begin{abstract}

We focus on the analysis of local minimizers of the Mahler volume, that is to say the local solutions to the problem
$$\min\{ M(K):=|K||K^\circ|\;/\;K\subset\R^d\textrm{ open and convex}, K=-K\},
$$
where $K^\circ:=\{\xi\in\R^d ; \forall x\in K, x\cdot\xi<1\}$ is the polar body of $K$, and $|\cdot|$ denotes the volume in $\R^d$.
According to a famous conjecture of Mahler the cube 
is expected to be a global minimizer for this problem.

{In this paper we express} the Mahler volume in terms of the support functional of the convex body, which allows us to compute first and second 
derivatives of the obtained functional. We deduce from these {computations} a concavity property of the Mahler volume which seems to be new. As a consequence of this property, we retrieve a result which supports the conjecture, namely that any local minimizer 
has a Gauss curvature that vanishes at any point where it is defined (first proven by Reisner, Sch\"utt and Werner in 2012, see \cite{RSW}). 
Going more deeply into the analysis in the two-dimensional case, we generalize the concavity property of the Mahler volume and also deduce a new proof that any local minimizer must be a parallelogram (proven by B\"or\"oczky, Makai, Meyer, Reisner in 2013, see \cite{BMMR}).
\\

{\it Keywords:\,} 
Shape optimization, convex geometry, Mahler conjecture.
\smallskip

\end{abstract}

\section{Introduction and results}

This paper is devoted to the analysis of local minimizers of the Mahler-volume functional. 
In particular we point out a concavity property of this functional, 
which supports the usual expectations about the minimizers, according to a well-known conjecture of Mahler. We also use this property to give new proofs of two results that can be found in \cite{RSW} and \cite{BMMR}, see Theorems \ref{th:dimd} and \ref{th:dim2} below.\\
%Let us be more precise on these results in this introduction.\\ 

\noindent{\bf Notation:}\\

Let $K\subset \R^d$ be a convex body, 
that is, $K$ is nonempty, open, convex, and bounded.
We can define the polar dual body of $K$:
$$K^\circ:=\left\{\;\xi\in\R^d \;/ \;\forall x\in K,\; x\cdot\xi<1\;\right\}.$$ 
The polar dual is always another convex body.  The {\it Mahler volume} $M(K)$ of 
%the convex body 
$K$ is defined as the product of the volumes of $K$ and its polar dual:
$$M(K):=|K||K^\circ|,$$
where $|\cdot|$ denotes the volume in $\R^d$.\\

We say that $K$ is symmetric when $K$ is centrally symmetric, that is $-K=K$. In that case, one can interpret $K$ as the open unit ball of a 
norm on $\R^d$, for which $K^\circ$ is simply the open unit ball for the dual norm, and is also symmetric.\\

It is well known that the ball maximizes the Mahler volume among convex symmetric bodies: this is the Blaschke-Santal\'o inequality:
$$\forall K\textrm{ convex symmetric body, } M(K)\leq M(B^d),$$
where $B^d$ is the unit Euclidean ball, with equality if and only if $K$ is an ellipsoid.\\

The corresponding minimization problem 
is the subject of a notorious
and difficult conjecture, which is the main motivation for this paper. Let us recall this conjecture:\\

\noindent{\bf The symmetric Mahler's conjecture:}\\

The symmetric version of Mahler's conjecture asserts that for all convex symmetric bodies $K \subset \R^d$, we should have

\begin{equation}\label{eq:mahlersym}
M(Q^d) = M(O_d) \leq M(K),
\end{equation}
where $Q^d$ is the unit cube and $O_{d}=(Q^d)^\circ=\{x\in\R^d\;/\;\sum_{i}|x_{i}|<1\}$ is the unit octahedron.\\
In \cite{BM} a great step is achieved since the authors proved that the Mahler conjecture
is true up to an exponential factor. In \cite{K}, 
Kuperberg improved the constant in this exponential factor, by proving:
$$\forall K\textrm{ symmetric convex body of }\R^d,\; (\pi/4)^{d-1} M(Q^d)\leq M(K) .$$
The proof of Mahler's conjecture in dimension 2 appeared 
already in the original paper of Mahler \cite{M}. It has also been proved by Reisner in \cite{R} that equality in \eqref{eq:mahlersym} is 
attained only for parallelograms.
In higher dimensions, \eqref{eq:mahlersym} is still an open question.

It should be remarked at this point that $Q^d$ and $O_{d}=(Q^d)^\circ$ 
are not the only expected minimizers.
In the first place, the Mahler volume is not only invariant by duality ($M(K)=M(K^\circ)$) but is also an affine invariant, 
in the sense that 
if $T:\R^d\to\R^d$ is a linear invertible transformation, then $M(T(K))=M(K)$. 
Secondly, there is a sort of invariance with dimension: 
$$M(Q^{d_{1}}\times O_{d_{2}})=\frac{M(Q^{d_{1}})M(O_{d_{2}})}{\left(\begin{array}{c}d_{1}+d_{2}\\d_{1}\end{array}\right)}=M(Q^{d_{1}+d_{2}}),$$
since $M(Q^d)=\frac{4^d}{d!}$. Therefore products of cubes and octahedra, polar bodies of products of cubes and 
octahedra, etc. should also be minimizers. However, in dimension 2 and 3, the cube and the octahedra are the only expected minimizers, up to affine transformation.
See \cite{T} for 
further details and remarks.\\

\noindent{\bf The nonsymmetric Mahler's conjecture:}\\

One can also pose a nonsymmetric version of the Mahler conjecture, which is perhaps easier, because we expect the minimizer to be unique, 
up to invertible affine transformations that preserve a certain choice of the origin. 
To be precise, we introduce a more suitable version of the Mahler volume in this nonsymmetric setting: since $M(K)$ is not invariant by translation, one can choose the position of $K$ so that it minimizes the Mahler volume:
$$\mathcal P(K)=\inf\{|K||(K-z)^\circ|, z\in int(K)\}$$
the minimum being attained at a unique $z=s(K)$, known as the Santal\'o point of $K$ (see \cite{Sa}). We refer to this functional as the {\it nonsymmetric Mahler volume}.\\

Denoting by $\Delta_{d}$ a $d$-dimensional simplex, it is conjectured 
(see \cite{M,KR})
that for every convex body $K\subset\R^d$ containing 0,
$$\mathcal{P}(\Delta_{d})\leq \mathcal{P}(K), $$
with equality only if and only if $K$ is a $d$-dimensional simplex.\\

In this article we emphasize some concavity properties of the Mahler functional; these properties are contained in Sections \ref{ssect:concdimd} and \ref{ssect:concdim2}. First, we point out that these properties can be used to obtain some geometrical information
about the minimizers of the Mahler volume.
Namely, we provide new proofs of the following results (see \cite{RSW,BMMR}):

\begin{theorem}\label{th:dimd}
Let $K^*$ be a symmetric convex body in $\R^d$, which minimizes 
the Mahler volume among symmetric convex bodies:
\begin{equation}\label{eq:min}
M(K^*)=\min\left\{\;M(K): \; K {\rm \, convex \,\; symmetric \,\; body }\right\}.
\end{equation}
If $\partial K^*$ contains
a relatively open set $\omega$ of class  $\C^2$, then the Gauss curvature of $K^*$ vanishes on $\omega$.

The same result holds if $K^*$ is no longer symmetric, and is a minimizer of the nonsymmetric Mahler volume among convex bodies containing $0$:
\begin{equation}\label{eq:min2}
\mathcal{P}(K^*)=\min\{\mathcal{P}(K): {\rm \, convex \, body }\ni 0\}.
\end{equation}

\end{theorem}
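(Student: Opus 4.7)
The strategy is a second-order variational argument against minimality. Suppose by contradiction that the Gauss curvature of $K^*$ is strictly positive at some $x_0\in\omega$. Via the Gauss map this corresponds to an open set $U\subset\S^{d-1}$ on which the support function $h:=h_{K^*}$ is of class $\C^2$ and the curvature form $A_h:=D^2_{\S^{d-1}} h+hI$ is positive definite. For any $\phi\in\C^2_c(U)$ (in the symmetric case taken even on $U\cup(-U)$) and $|t|$ small, $h_t:=h+t\phi$ remains a support function, so it defines an admissible competitor $K_t$. The plan is to find such a $\phi$ for which
\[
\frac{d^2}{dt^2}\bigg|_{t=0} M(K_t)<0,
\]
contradicting the (local) minimality of $K^*$.

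Write $M(h_t)=V(t)W(t)$, using $\rho_{K^\circ}=1/h$:
\[
V(t)=\tfrac{1}{d}\int_{\S^{d-1}} h_t\,\det A_{h_t}\,d\sigma, \qquad W(t)=\tfrac{1}{d}\int_{\S^{d-1}} h_t^{-d}\,d\sigma.
\]
Straightforward computation gives the first variations
\[
V'(0)[\phi]=\int\phi\,\det A_h\,d\sigma,\qquad W'(0)[\phi]=-\int h^{-d-1}\phi\,d\sigma,
\]
and the second variations
\[
V''(0)[\phi]=\int\phi\,\mathrm{tr}\bigl(\mathrm{cof}(A_h)(D^2\phi+\phi I)\bigr)\,d\sigma, \qquad W''(0)[\phi]=(d+1)\int h^{-d-2}\phi^2\,d\sigma.
\]
In dimension $2$ this reduces to $V''(0)[\phi]=\int(\phi^2-(\phi')^2)\,d\theta$, already unbounded below on oscillating $\phi$. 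In general, the operator $\mathcal{L}_h:\phi\mapsto\mathrm{tr}(\mathrm{cof}(A_h)(D^2\phi+\phi I))$ has principal symbol $-\xi^{T}\mathrm{cof}(A_h)\xi$, strictly \emph{negative} on $U$ because the cofactor of a positive definite matrix is positive definite; hence $\int\phi\,\mathcal{L}_h\phi\,d\sigma$ is unbounded below on oscillating $\phi$.

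Next, take a highly oscillating test function $\phi_n(\xi):=\psi(\xi)\cos\bigl(n f(\xi)\bigr)$, with $\psi\in\C^2_c(U)$ a bump and $f\in\C^\infty(U)$ satisfying $\nabla f\neq 0$ on $\mathrm{supp}(\psi)$. By Riemann--Lebesgue, $V'(0)[\phi_n]\to 0$ and $W'(0)[\phi_n]\to 0$; the zeroth-order form $W''(0)[\phi_n]$ stays $O(1)$; and by the preceding computation $V''(0)[\phi_n]\sim -c_\psi\,n^2$ as $n\to\infty$. Therefore
\[
M''(0)[\phi_n]=V''(0)[\phi_n]\,W(0)+2V'(0)[\phi_n]\,W'(0)[\phi_n]+V(0)\,W''(0)[\phi_n]\longrightarrow -\infty,
\]
contradicting $M''(0)[\phi_n]\geq 0$ at a local minimum. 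Therefore the Gauss curvature must vanish throughout $\omega$. In the nonsymmetric setting the Santal\'o point $s(K_t)$ depends smoothly on $t$ and is absorbed as a harmless translation, so the same argument applies verbatim to $\mathcal{P}$.

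\textbf{Main obstacle.} In dimension $2$ everything is elementary. For $d\geq 3$ the real work is the expansion of the Monge--Amp\`ere-type operator $h\mapsto\det A_h$ to second order in $\phi$; this is a cofactor computation combined with integration by parts on $\S^{d-1}$ (using the divergence-free character of the cofactor of $A_h$), with the key point that the cofactor of a positive definite matrix is positive definite, giving the needed sign of the principal symbol of $\mathcal{L}_h$. Equivalently one may invoke the Alexandrov--Fenchel inequality, which yields concavity of $V$ along directions $\phi$ with $V'(h)[\phi]=0$---this is the ``concavity property'' referred to in the introduction---and combine it with Riemann--Lebesgue to absorb the lower-order cross terms.
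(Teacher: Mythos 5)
Your proposal is correct and follows essentially the same route as the paper: second variation of $A(h)B(h)$ in the support function, integration by parts using the divergence-free cofactor matrix of $h''+hId$, positive-definiteness of that cofactor (eigenvalues $\kappa_i/\kappa$) giving the negative-definite principal part, and a contradiction with second-order optimality. The only cosmetic difference is that the paper packages this as the explicit concavity estimate $J''(h)\cdot(v,v)\leq C\|v\|_{L^2}^2-\alpha|v|_{H^1_0}^2$ and invokes the failure of the embedding $L^2(U)\subset H^1_0(U)$, whereas you exhibit the oscillating sequence $\psi\cos(nf)$ directly.
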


\begin{theorem}\label{th:dim2}
In dimension $2$, any symmetric {\bf local} minimizer of \eqref{eq:min} is a parallelogram.
\end{theorem}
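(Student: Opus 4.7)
The plan is to combine Theorem~\ref{th:dimd} with a sharper analysis of the concavity established in Lemmas~\ref{lem:conc} and \ref{lem:conc2}, together with a dimension count exploiting the affine invariance of the Mahler volume. First, I reduce the problem to polygons. Since $K^*$ is a local minimizer, Theorem~\ref{th:dimd} forces the Gauss curvature of $\partial K^*$ to vanish on every $\C^2$ relatively open piece; in $\R^2$ this forces each such piece to be a straight line segment. To upgrade this to the conclusion that $K^*$ is a polygon, one must rule out a large singular set in $\partial K^*$ (e.g.\ a Cantor-type set of corners, corresponding to a continuous singular part of the measure $h_{K^*}'' + h_{K^*}$). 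A further argument based on concavity handles this: one constructs a small symmetry-preserving Minkowski perturbation that smooths out any continuous singular part and strictly decreases $M$, contradicting local minimality. Hence $K^*$ is a symmetric convex polygon with, say, $2N$ vertices, $N \geq 2$.

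Next, fix the $N$ edge-normal pairs $\pm \nu_1, \dots, \pm \nu_N$ of $K^*$, and parametrize the symmetric polygons of the same combinatorial type near $K^*$ by the $2N$ real parameters $(d_i, \theta_i)_{i=1}^N$, where $d_i > 0$ is the distance from the origin to the edge with outer normal $\nu_i$ and $\theta_i = \arg(\nu_i)$. On this $2N$-dimensional smooth manifold, $M$ is smooth, its first differential at $K^*$ vanishes by local minimality, and its Hessian is negative semi-definite thanks to the second-variation formulas underlying Lemmas~\ref{lem:conc} and \ref{lem:conc2}.

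The key step is then to exploit the affine invariance of $M$. The Mahler volume is invariant under the action of $GL_2(\R)$ on convex bodies; at a generic polygon this action is locally free and sweeps out a smooth $4$-dimensional orbit within the space of polygons, whose tangent space at $K^*$ lies in the kernel of the Hessian. The technical core of the argument is to show that the Hessian is \emph{strictly} negative on every direction transverse to this orbit, which amounts to an explicit second-order computation on the polygonal model. Granting this strict concavity, the kernel of the Hessian has dimension exactly $4$, and the parameter space being $2N$-dimensional leaves $2N - 4$ independent directions of strict decrease of $M$. For $N \geq 3$, i.e.\ $2N - 4 \geq 2$, this contradicts the local minimality of $K^*$; hence $N = 2$ and $K^*$ is a parallelogram.

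The main obstacle is the sharpening of the general concavity of Lemmas~\ref{lem:conc} and \ref{lem:conc2} into the strict statement just used: a priori, the Hessian could have extra zero directions not tangent to the affine orbit, and ruling those out requires an explicit second-order computation of $|K|\cdot|K^\circ|$ in the edge-distance and edge-angle coordinates. The polygonal reduction in the first step also carries nontrivial technical weight, since vanishing Gauss curvature on the $\C^2$ part of $\partial K^*$ does not on its own exclude a non-polygonal singular structure.
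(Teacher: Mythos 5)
Your overall strategy (reduce to polygons, then kill all polygons with more than four sides by a second-variation argument exploiting affine invariance) is the same in spirit as the paper's, but both of your steps have genuine gaps where the actual work lives. For the polygonal reduction, Theorem \ref{th:dimd} gives you nothing unless $\partial K^*$ actually contains a relatively open $\C^2$ piece; a minimizer could a priori have a boundary whose singular set is everywhere dense (equivalently, $h''+h$ could have a nontrivial continuous singular part with support of empty interior), and then the theorem is vacuous. You acknowledge this, but your proposed fix --- a symmetry-preserving perturbation that ``smooths out'' the singular part and ``strictly decreases $M$'' --- is asserted, not proved, and there is no reason a smoothing perturbation decreases $M$ (smoothing pushes toward the ball, which is the \emph{maximizer}). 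The paper instead rules out accumulation points of $\mathrm{supp}(h_0''+h_0)$ directly: following Lachand-Robert--Peletier, it builds perturbations $v_n$ with $v_n''+v_n$ supported in shrinking intervals $(0,\eps_n)$, and combines the concavity estimate of Lemma \ref{lem:conc2sym} with the Poincar\'e inequality $\|v\|_{L^\infty}\le\sqrt{\eps_n}\,|v|_{H^1}$ to make the second variation strictly negative for small $\eps_n$, contradicting second-order optimality.

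For the second step, your dimension count rests on two unproved claims. First, that the Hessian of $M$ restricted to the $2N$-dimensional manifold of symmetric $2N$-gons is negative semidefinite ``thanks to'' Lemmas \ref{lem:conc} and \ref{lem:conc2}: those lemmas only give $J''(h)\cdot(v,v)\le C\|v\|_{L^\infty}\|v\|_{L^1}-\alpha|v|_{H^1}^2$, which has an indefinite sign for the finite-dimensional polygonal perturbations you are using (their $H^1$ seminorm does not dominate their $L^2$ norm), so no sign conclusion follows. Second, and more importantly, the strict negativity of the Hessian transverse to the $GL_2$ orbit is exactly the heart of the matter, and you explicitly defer it as ``an explicit second-order computation'' without doing it. The paper's proof \emph{is} that computation: it takes the single-vertex perturbation $v$ with $v''+v=\alpha\delta_{\theta_1}$, derives the closed-form expression \eqref{eq:J''bis} for $J''(h)\cdot(\tilde v,\tilde v)$ using the polygonal formula for $B$ and the first-order condition \eqref{11}, normalizes by an affine map so that $\theta_0=0$, $\theta_1=\pi/2$, $h(\theta_0)=h(\theta_1)=1$, $a_1\le a_0$, and then bounds the bracket strictly below zero via the estimate $|\tan\theta_2|\ge(2-a_1)/(2-a_0)$ and a maximum-principle argument for the harmonic factor $a_1(2-a_0)-1$. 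Without that computation (or an equivalent one), your argument does not close.
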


\begin{remark}
The word ``local'' in this result is to be understood in the sense of the $H^1$-distance between the support functions of the bodies; refer to Remark \ref{rem:local} for more details. Theorem \ref{th:dimd} is also valid for local minimizer, but here one can even restrict to neighbors of $K^*$ in the $\C^\infty$-topology, the boundary of which differ from $\partial K^*$ only in $\omega$.
\end{remark}

We briefly notice that in dimension higher than 2, an easy consequence of Theorem \ref{th:dimd} can be obtained on the regularity of minimizers:

\begin{corollary}
If $K^*$ is a minimizer for \eqref{eq:min} or \eqref{eq:min2}, then $K^*$ cannot be globally $\C^2$.
\end{corollary}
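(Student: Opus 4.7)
My plan is to proceed by contradiction, applying Theorem~\ref{th:dimd} with $\omega=\partial K^*$. If $\partial K^*$ were globally of class $\C^2$, then the whole boundary would itself qualify as a relatively open subset of $\partial K^*$ of class $\C^2$. The theorem then forces the Gauss curvature $\kappa$ of $K^*$ to vanish identically on $\partial K^*$.

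To contradict this conclusion, I would invoke the classical total Gauss curvature formula: for any bounded convex body with $\C^2$ boundary, the Gauss map $\nu:\partial K^*\to \S^{d-1}$ is surjective with Jacobian equal to $\kappa$, and the change-of-variables formula therefore yields
$$\int_{\partial K^*}\kappa\,d\mathcal{H}^{d-1}=|\S^{d-1}|>0.$$
This is incompatible with $\kappa\equiv 0$ on $\partial K^*$, which closes the argument.

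The only ingredient beyond Theorem~\ref{th:dimd} is thus the above total curvature identity, which is a standard fact of convex/differential geometry; I do not expect any real obstacle. The argument applies verbatim in the symmetric and in the nonsymmetric setting, as Theorem~\ref{th:dimd} covers both cases.
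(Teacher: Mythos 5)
Your proof is correct, and it is essentially the deduction the paper intends: the corollary is stated as an ``easy consequence'' of Theorem~\ref{th:dimd} with no written proof, and applying the theorem with $\omega=\partial K^*$ to force $\kappa\equiv 0$ is exactly the right move. One pedantic remark on your closing step: the area formula together with surjectivity of the Gauss map only gives $\int_{\partial K^*}\kappa\,d\mathcal{H}^{d-1}=\int_{\S^{d-1}}\#\nu^{-1}(u)\,d\sigma(u)\ge|\S^{d-1}|$ directly (equality needs the a.e.\ injectivity of $\nu$ for convex bodies), but the inequality already contradicts $\kappa\equiv 0$. An even more elementary finish, avoiding the area formula altogether: at a point of $\partial K^*$ farthest from an interior point, $K^*$ is touched from outside by a sphere of radius $R=\max$ distance, so all principal curvatures there are at least $1/R$ and $\kappa\ge R^{-(d-1)}>0$.
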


As we noticed before, in order to obtain Theorem \ref{th:dimd}, we apply the framework of calculus of variations
to a formulation of the Mahler volume 
in terms of the support function of the convex body, and we 
observe that the Mahler functional 
enjoys a certain concavity property, using the second-order derivative of the functional, see Propositions \ref{prop:conc} and Corollary \ref{cor:conc} (these ideas are inspired by some results in \cite{LN,BFL}). Therefore, it becomes quite natural that the minimizers should saturate the constraint, which is here the convexity of the shape. This result bolsters the conjecture and the intuition that the minimizers should contain flat parts, and that the Mahler volume should capture the roundness of a convex body.

Theorem \ref{th:dimd} can be considered as a local version of a 
result in \cite{S}, which asserts that if $K$ belongs to the class $\C^2_{+}$, that is to say $K$ is globally $\C^2$ and has a positive Gauss curvature everywhere on its boundary, then one can find a suitable deformation that decreases the Mahler volume (and preserves the symmetry if $K$ is itself symmetric).\\
Local minimality of the cube and the simplex
are also proved in \cite{NPRZ} and \cite{KR},
respectively for problem \eqref{eq:min} and \eqref{eq:min2}.

Even though the Mahler conjecture is still a distant hope, 
we emphasize that our analysis can be strengthened in dimension 2. More precisely, we can obtain in dimension 2 a more general concavity property of the Mahler volume (valid at any convex body, without assumption of regularity or positive curvature), see Proposition \ref{prop:conc2} and Corollary \ref{cor:conc2}, and then going deeper in the computations we obtain a new proof of Theorem \ref{th:dim2}.
%allows us to retrieve a proof of Mahler's conjecture
%in two dimensions, 
%indeed a slight strengthening of the equality case in \eqref{eq:mahlersym} of S. Reisner (\cite{R}), dealing not only with global minimizers but also local minimizers:

We note that even though Theorems \ref{th:dimd} and \ref{th:dim2} are not new in the literature, our strategy is self-contained 
{and provides} 
short and new proofs, and we believe the concavity properties of the Mahler functional we point out are interesting on their own. Moreover, they underline a new challenge related to the Mahler conjecture, namely to express some concavity property of the Mahler volume that generalizes Corollary \ref{cor:conc} in the non-smooth setting, which we were only able to do in dimension 2 in Corollary \ref{cor:conc2} (see also \cite{MR} for additional concavity properties).

In the next section, we describe in which sense the Mahler volume enjoys a concavity property for $d$-dimensional shapes and deduce Theorem \ref{th:dimd}. In the third section we focus on the 2-dimensional improvements.

\section{The $d-$dimensional case}\label{sect:dimd}

\subsection{A functional formulation of the Mahler volume}\label{ssect:dimdformulation}

If $K$ is a convex body, one can define its support function $h_{K}:\S^{d-1}\to\R$ by:
$$h_{K}(\theta):=\sup \left\{\;x\cdot\theta,\; x\in K\;\right\}.$$
It is well-known (see \cite{Sc} for details, especially sections 1.7 and 2.5) that $h_{K}$ characterizes the convex body $K$, and that its positive 1-homogeneous extension $\tilde{h}_{K}$ to $\R^d$ (that is to say $\tilde{h}_{K}(\lambda x)=\lambda h_{K}(x),\;\forall \lambda\geq 0, \forall x\in\S^{d-1}$) is convex. In that case we shall say that $h_{K}$ is {\it convex}. Moreover, any functional $h:\S^{d-1}\to\R$,
the extension of which is convex, 
is the support function of a convex body. The volumes of $K$ and $K^\circ$ are conveniently written in
terms of $h_K$:
$$|K|=\frac{1}{d}\int_{\S^{d-1}}h_{K} \det(h_{K}''+h_{K}Id)d\sigma(\theta)\quad\textrm{ and }|K^\circ|=\frac{1}{d}\int_{\S^{d-1}}h_{K}(\theta)^{-d}d\sigma(\theta),$$
where $h_{K}''$ denotes the matrix of second covariant derivatives with respect to an orthonormal frame on $\S^{d-1}$, when it is well-defined (see the remarks below).
Hence the problem of minimizing the Mahler volume can formally be formulated as:

$$\min\left\{\frac{1}{d}\int_{\S^{d-1}}h \det(h''+hId)d\sigma\frac{1}{d}\int_{\S^{d-1}}h^{-d}d\sigma , h:\S^{d-1}\to\R \textrm{ convex}\right\}.$$

\noindent
In order to incorporate the symmetry constraint, one 
simply demands that
admissible $h$ be even.\\

These formulas are only valid if one can 
make sense of
$\det(h''+hId)$, which is not clear without regularity (one should use the 
surface area measure of Alexandrov \cite{Sc}).
Furthermore, some care is necessary in using the support function, because it is 
defined on the Gauss sphere, which is only a one-to-one image of $\partial K$ in the 
smooth, strictly convex case. Therefore we are going to localize the above formulation thanks to the following Lemma:

\begin{lemma}\label{lem:curv}
If $K$ is a convex body, and $\omega\subset\partial K$ such that $\omega$ is $\C^2$ and the Gauss curvature is positive on $\omega$, then 
$\Omega:=\nu_{K}(\omega)$, where $\nu_{K}$ is the Gauss map of the body $K$, is a nonempty open set in $\S^{d-1}$, $h_{K}$ is $\C^2$ in $\Omega$, and $det(h_{K}''+h_{K}Id)>0$ on $\Omega$.
\end{lemma}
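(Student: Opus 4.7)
The plan is to establish the three conclusions in turn, relying on classical differential-geometric identities from Schneider's book (already cited as \cite{Sc}).

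First, I would prove that $\Omega=\nu_K(\omega)$ is nonempty and open. Since $\omega$ is $\C^2$, the Gauss map $\nu_K:\omega\to \S^{d-1}$ is $\C^1$, and its differential at a point $x\in\omega$ is the Weingarten (shape) operator on $T_x\partial K$. The determinant of this operator, expressed in orthonormal bases, is precisely the Gauss curvature at $x$, which is positive by hypothesis. The inverse function theorem then gives that $\nu_K$ is a local $\C^1$-diffeomorphism, so $\Omega$ is open (and clearly nonempty since $\omega$ is).

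Second, I would upgrade the regularity of $h_K$ on $\Omega$. The key is the identity $\nabla\tilde h_K(\theta)=\nu_K^{-1}(\theta)$ for $\theta\in\Omega$, where $\tilde h_K$ is the 1-homogeneous extension; equivalently, $h_K(\theta)=x(\theta)\cdot\theta$ with $x(\theta):=\nu_K^{-1}(\theta)\in\omega$. From step one, $\nu_K^{-1}$ is $\C^1$ on $\Omega$, hence $\nabla\tilde h_K$ is $\C^1$ on the open cone over $\Omega$, which means $\tilde h_K\in\C^2$ there, and therefore $h_K\in\C^2(\Omega)$.

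Third, I would identify the matrix $h_K''+h_KI$ with the inverse Weingarten map pulled back to the sphere via the Gauss map. Differentiating the relation $\nabla\tilde h_K\circ\nu_K=\mathrm{id}_\omega$ and comparing with the definition of the covariant Hessian on $\S^{d-1}$ yields the classical formula that $h_K''+h_KI$ is, at $\theta$, the matrix of principal radii of curvature at $x(\theta)$, so that $\det(h_K''(\theta)+h_K(\theta)I)=1/\kappa(x(\theta))>0$.

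The main (mild) obstacle is to justify carefully the identity $\nabla\tilde h_K=\nu_K^{-1}$ and the formula relating $h_K''+h_KI$ to the reciprocal Gauss curvature in an orthonormal frame on $\S^{d-1}$. Both are standard in the smooth setting once $h_K$ is known to be $\C^2$, so one can simply invoke the relevant statements from \cite{Sc} (sections 1.7 and 2.5) rather than reproduce the local coordinate computation.
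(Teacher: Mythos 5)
Your proposal is correct and is essentially the paper's own argument: the paper's proof is a one-line appeal to the classical local computation in \cite[Section 2.5]{Sc}, and that computation is exactly what you spell out (inverse function theorem for the Gauss map via the Weingarten operator, the identity $\nabla\tilde h_K=\nu_K^{-1}$ giving $\C^2$ regularity, and the identification of $h_K''+h_KId$ with the reverse Weingarten map so that its determinant is $1/\kappa>0$). The only detail worth making explicit when you invoke these identities is that positive Gauss curvature at $x\in\omega$ forbids a second boundary point with the same outer normal (a shared normal would force a segment in $\partial K$ and hence a vanishing principal curvature at $x$), which is what makes $\nu_K^{-1}$ and $h_K(\theta)=x(\theta)\cdot\theta$ well defined.
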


\begin{proof} This is classical, and generally stated for convex bodies which are globally $\C^2$ and with a Gauss curvature everywhere positive, but the proof is actually local and so our lemma follows with usual arguments, see for example \cite[Section 2.5, p.~106]{Sc}.
\qed
\end{proof}

With the help of this Lemma, we can localize the optimization problem: we introduce $K^*$ a (local) solution of \eqref{eq:min} or \eqref{eq:min2} and $\omega\subset \partial K^*$ a relative open set, assumed to be $\C^2$ with positive Gauss curvature; we restrict the calculation to a relatively open subset $U\Subset\Omega:=\nu_{K^*}(\omega)$. Then considering shapes that differ from $K^*$ only on $\omega$, we easily deduce that $h_{0}:=h_{K^*}$ is a (local) solution of the following problem of calculus of variations:

\begin{equation}\label{eq:minh}
J(h_{0})=\min\{J(h), h:\S^{d-1}\to\R \textrm{ convex, }\C^2 \textrm{ in }\overline{U}\textrm{ and even}\},
\end{equation}
or, in the nonsymmetric case:
\begin{equation}\label{eq:minh2}
J(h_{0})=\min\{J(h), h:\S^{d-1}\to\R \textrm{ convex, }\C^2 \textrm{ in }\overline{U}\textrm{ and positive}\},
\end{equation}

\noindent
where

\begin{equation}\label{eq:J}
J(h)=J_{h_{0},U}(h):=A(h)B(h), \;\;\;\;A(h):=\frac{1}{d}\int_{U}h \det(h''+hId)d\sigma\quad\textrm{ and }B(h)=\frac{1}{d}\int_{U}h^{-d}d\sigma.
\end{equation}

\noindent
and we know that $h_{0}$ is $\C^2$ in $\Omega$ and $\det({h_{0}}''+h_{0}Id)>0$ on $\Omega$. Note that $J$ depends on $h_{0}$ and $U$ and can be considered as a localization of the Mahler volume.\\
Note that an analytical characterization of convexity in terms of second-order derivatives in this context is:
\begin{equation}\label{eq:conv}
\textrm{If the eigenvalues of }(h''+h Id)\textrm{ are nonnegative, then }h\textrm{ is convex.}
\end{equation}
\begin{remark}\label{rem:santalo}In \eqref{eq:minh2}, we drop the translation operation by the Santal\'o point, since this is an artificial constraint: a local minimizer among convex sets is also a local minimizer among sets whose Santal\'o point is zero, and reciprocally.
\end{remark}

\subsection{Concavity of the functional}\label{ssect:concdimd}

We prove here an estimate on the second order derivative of $J$, which implies a ``local'' concavity property of the functional, see \cite{LN,BFL} for similar results.\\

\begin{proposition}\label{prop:conc}
Let $\Omega\subset\S^{d-1}$ and $h_{0}:\S^{d-1}\to\R$ be of class $\C^2$ in $\Omega$, with $det(h_{0}''+h_{0}Id)>0$ on $\Omega$, and $U\Subset\Omega$ relatively open. Then $J$ admits second order directional derivative at $h_{0}$ in every direction $v\in\C^\infty_{c}(U)$, and moreover there exist $C=C(h_{0},U)$, and $\alpha=\alpha(h_{0},U)>0$ such that:
$$\forall v\in\C^\infty_{c}(U), \quad J''(h_{0})\cdot(v,v)\leq C\|v\|_{L^2(\Omega)}^2-\alpha|v|_{H^1_{0}(\Omega)}^2,$$
where $|v|_{H^1_{0}(\Omega)}^2=\int_{\Omega}|\nabla v|^2d\sigma$ denotes the standard 
norm on $H^1_{0}(\Omega)$ and $J=J_{h_{0},U}$ was introduced in \eqref{eq:J}.
\end{proposition}
As a direct consequence, we obtain the following Corollary which explains in what sense the previous result can be considered as a concavity property of the Mahler functional:
\begin{corollary}\label{cor:conc}
With the same notation as in Proposition \ref{prop:conc}, if $U'\Subset U$ is relatively open, and $\lambda_{1}^D(\Delta_{\S^{d-1}},U')>C/\alpha$, then
$$\forall v\in\C^\infty_{c}(U')\setminus\{0\}, \quad J''(h_{0})\cdot(v,v)<0,$$
where $\lambda_{1}^D(\Delta_{\S^{d-1}},U')=\inf\{|\varphi|_{H^1_{0}(U')}^2, \varphi\in \C^\infty_{c}(U'), \|\varphi\|_{L^2(U')}=1\}$ is the first Dirichlet eigenvalue of the Laplace operator on $U'\subset\S^{d-1}$.
\end{corollary}
\begin{remark}
The assumption $\lambda_{1}^D(\Delta_{\S^{d-1}},U')>C/\alpha$ is satisfied as soon as $U'$ has a small $(d-1)$-volume.
\end{remark}
\begin{proof}
The existence of directional derivatives is easy with the regularity assumed on $h_{0}$ and $v$.
Refering to \cite[Proposition 5,6]{C} for more detailed calculations, we get
\begin{eqnarray*}
\forall v\in \C^\infty_{c}(U), &&A'(h_{0})\cdot v=\int_{\S^{d-1}}v \det(h_{0}''+h_{0}Id)d\sigma,\\
&&A''(h_{0})\cdot(v,v)=\int_{\S^{d-1}}v\sum_{i,j\leq d-1}c_{ij}(\partial_{ij}v+\delta_{ij}v)d\sigma=\int_{\S^{d-1}}\left(Tr(c_{ij})v^2-\sum_{i,j\leq d-1}c_{ij}\partial_{i}v\partial_{j}v\right)d\sigma,
\end{eqnarray*}
where $(c_{ij})_{1\leq i,j\leq d-1}$ is the cofactor matrix of $(h_{0}''+h_{0}Id)=(\partial_{ij}h_{0}+h_{0}\delta_{ij})_{1\leq i,j\leq d-1}$, that is to say
$(c_{ij})_{1\leq i,j\leq d-1}=\det(h_{0}''+h_{0}Id)(h_{0}''+h_{0}Id)^{-1}$ (For the last formula, we integrate by parts and use Lemma 3 in \cite{C}).\\
Moreover, we easily get
$$
B'(h_{0})\cdot v= -\displaystyle\int_{\S^{d-1}}\frac{v}{h_{0}^{d+1}}d\sigma, \quad B''(h_{0})\cdot (v,v)= (d+1)\displaystyle{\int_{\S^{d-1}} \frac{v^2}{h_{0}^{d+2}}}d\sigma.
$$
Therefore

\begin{eqnarray*}
(AB)''(h_{0})\cdot (v,v)&=&A''(h_{0})\cdot (v,v)B(h_{0})+2A'(h_{0})\cdot (v) B'(h_{0})\cdot (v) +B''(h_{0})\cdot (v,v)A(h_{0})\\
&=&A(h_{0})(d+1)\displaystyle{\int_{\S^{d-1}} \frac{v^2}{h_{0}^{d+2}}}d\sigma-2\int_{\S^{d-1}}v \det(h_{0}''+h_{0}Id)d\sigma\displaystyle\int_{\S^{d-1}}\frac{v}{h_{0}^{d+1}}d\sigma\\
&&+B(h_{0})\int_{\S^{d-1}}\left(Tr(c_{ij})v^2-\sum_{i,j\leq d-1}c_{ij}\partial_{i}v\partial_{j}v\right)d\sigma,
\end{eqnarray*}
but the eigenvalues of the matrix $(c_{ij})$ are $\kappa_{i}/\kappa$, where $\kappa$ is the Gauss curvature and $\kappa_{i}$ are the principal curvatures \cite[Corollary 2.5.2]{Sc}.
Therefore
$$\sum_{i,j\leq d-1}c_{ij}\partial_{i}v\partial_{j}v\geq \beta |\nabla v|^2,$$
where $\beta(\theta)=\min_{i}\kappa_{i}(\theta)/\kappa(\theta)$.
This then leads to the result, with 
$$C=(d+1)A(h_{0})\left\|\frac1{h_{0}^{d+2}}\right\|_{L^\infty(U)}
+2\left\|\frac1\kappa\right\|_{L^\infty(U)}\left\|\frac1{h_{0}^{d+1}}\right\|_{L^\infty(U)}
\mathcal{H}^{d-1}(U)+B(h_{0})\left\|\frac H\kappa\right\|_{L^\infty(U)},$$
where $H=\sum_{i} \kappa_{i}$, and $\alpha=B(h_{0})\min_{\theta\in U}\beta(\theta)$, 
which is positive since $det(h_{0}''+h_{0}Id)>0$ on $\Om$.
\qed
\end{proof}

\subsection{Proof of Theorem \ref{th:dimd}}\label{sect:conc}
\noindent{\bf $\bullet$ Nonsymmetric case:}
Let $K^*$ {be a} local minimizer of the Mahler volume $\mathcal{P}$ such that $0\in\overset{\circ}{K}$. Then it is also a local minimizer of $M$, since the translation operation by the {Santal\'o} point is an artificial constraint when dealing with the minimization problem.

We assume there exists $\omega$ a $C^2$ subset of $\partial K^*$ where the Gauss curvature is greater than $\alpha>0$. 
Then with Lemma \ref{lem:curv}, $h_{0}=h_{K^*}$ is optimal for the following problem:
$$\min\left\{J(h):=\frac{1}{d}\int_{\Om}h \det(h''+hId)d\sigma\frac{1}{d}\int_{\Om}h^{-d}d\sigma , h:\S^{d-1}\to(0,\infty) \textrm{ convex}\right\},$$
where $\Om=\nu_{K^*}(\omega)$.

For all $v\in \C^\infty_{c}(\Om)$, $h_{0}+tv$ is still the support function of a convex set for sufficiently small $|t|$:
indeed, the eigenvalues of  $(h_{0}+tv)''+(h_{0}+tv)Id$ are nonnegative, since they are close to those of $h_{0}''+h_{0}$, which are positive, and we use \eqref{eq:conv}. Therefore 
the second-order optimality condition and Proposition \ref{prop:conc} yields
\begin{equation}\label{eq:ordre2}
\forall v\in\C^\infty_{c}(\Om), 0\leq J''(h_{0})\cdot(v,v)\leq  C\|v\|_{L^2(\Omega)}^2-\alpha|v|_{H^1_{0}(\Omega)}^2.
\end{equation}
This would imply the false imbedding $L^2(\Om)\subset H^1_{0}(\Om)$, which is a contradiction.\qed

\noindent{\bf $\bullet$ Symmetric case:}
A similar proof as for the nonsymmetric case applies: indeed, we just need to restrict ourselves to symmetric perturbations.
With the same notation as in the previous proof, we can assume without restriction, that $\Omega$ is symmetric in the sense that $-\Om=\Om$. $h_{0}$ is therefore solution of
$$\min\left\{J(h), h:\S^{d-1}\to(0,\infty) \textrm{ convex and even}\right\}.$$
Therefore \eqref{eq:ordre2} is satisfied for any $v\in C^\infty_{c}(\Om)$ even, which also provides a contradiction.
\qed

\section{The 2-dimensional case}

In this section, we focus on the case $d=2$. Compare to Section \ref{sect:dimd}, we take advantage of the 2-dimensional framework to retrieve similar results without assumption of regularity or strict convexity of shapes. We therefore obtain a more general concavity property of the Mahler functional: this prevent the localization procedure from Section \ref{ssect:dimdformulation} and first implies that a minimizer of the Mahler problem is a polygon. With a more thorough analysis, this actually implies that a local minimizer of \eqref{eq:min}  is a parallelogram, which contains the results of Mahler and Reisner \cite{M,R}, that is to say inequality \eqref{eq:mahlersym} with the 
case of equality.

\subsection{A functional formulation of the Mahler volume}

We express the functional in terms of the support function, and since we work in dimension 2, 
we are now able to write the Mahler volume without any regularity assumption.\\

Using polar coordinates, we regard $\theta$ as in $\mathbb{T}=\R/(2\pi\Z)$ rather than in $\S^1$, and therefore $h_{K}:\T\to\R$ is viewed as a $2\pi$-periodic function. Therefore,
$$M(K)=\frac{1}{2}\int_{\T}(h_{K}^2(\theta)-h_{K}'^2(\theta))d\theta\int_{\mathbb{T}}\frac{1}{2h_{K}^2(\theta)}d\theta,$$
and the convexity constraint on the set can be written $h_{K}''+h_{K}\geq 0$, in the sense of a periodic distribution on $\R$. This implies for example that $h_{K}\in W^{1,\infty}(\T)$. We are therefore interested in the following optimization problems:

\begin{equation}
J(h_{0})=\min\left\{\;J(h):=A(h)B(h), \;h''+h\geq 0 \textrm{ and } \;\forall\;\theta\in\T,\; h(\theta)=h(\theta+\pi)\;\right\},
\end{equation}
and, in the nonsymmetric case,
\begin{equation*}
J(h_{0})=\min\left\{\;J(h), \;h''+h\geq 0 \textrm{ and }h>0\;\right\},
\end{equation*}
with the same notation as in the previous section:

$$A(h)=\frac{1}{2}\int_{\mathbb{T}}(h^2-h'^2)d\theta, \quad B(h)=\int_{\mathbb{T}}\frac{1}{2h^2}d\theta.$$
Note that compare to Section \ref{ssect:dimdformulation}, we assume no regularity nor strict convexity on the minimizer, and the above formulation is global.

\subsection{Concavity of the functional}\label{ssect:concdim2}

We now prove a 2-dimensional version of Propositions \ref{prop:conc}, dropping the regularity assumption on $h$:\\

\begin{proposition}\label{prop:conc2}
If $h_{0}\in H^1(\T)$ such that $h_{0}>0$ and $h''+h\geq 0$, then $J:H^1(\T)\to\R$ is twice Fr\'echet differentiable around $h_{0}$ and there exists $C=C(h)$, and $\alpha=\alpha(h)>0$ such that:
$$\forall v\in H^1(\T), \quad J''(h)\cdot(v,v)\leq C\|v\|_{L^\infty(\T)}\|v\|_{L^1(\T)}-\alpha|v|_{H^1(\T)}^2.$$
\end{proposition}

\begin{corollary}\label{cor:conc2}
With the same notation as in Proposition \ref{prop:conc2}, if $a\in(0,2\pi)$ and 
$$\lambda([0,a]):=\inf\left\{\frac{|\varphi|_{H^1_{0}(0,a)}^2}{\|\varphi\|_{L^1(0,a)}\|\varphi\|_{L^\infty(0,a)}}, \varphi\in \C^\infty_{c}(0,a)\right\}>C/\alpha,$$
 then
$$\forall v\in H^1_{0}(0,a)\setminus\{0\}, \quad J''(h)\cdot(v,v)<0.$$
\end{corollary}
\begin{remark}
If $a$ is small enough, then the assumption $\lambda([0,a])>C/\alpha$ is satisfied. It would be interesting to investigate the best computation of $C$ and $\alpha$ in order to obtain the maximal $a$ satisfying such a property. In a way, this is the strategy of the Section \ref{ssect:proofdim2} for specific deformations $v$ combining the information given by the first order condition. 
\end{remark}
\begin{proof}
The regularity of $J$ is obvious. We also easily get:
$$
\begin{array}{ll}
A'(h)\cdot v= \displaystyle\int_{\T} hv-h'v', &A''(h)\cdot (v,v)= \displaystyle\int_{\T} v^2-v'^2,\\
B'(h)\cdot v= -\displaystyle\int_{\T}\frac{v}{h^3}, & B''(h)\cdot (v,v)= 3\displaystyle{\int_{\T} \frac{v^2}{h^4}},
\end{array}
$$
and so
\begin{eqnarray*}
\nonumber(AB)'(h)\cdot v&=& B(h)\displaystyle\int_{\T} (hv-h'v')-A(h)\displaystyle\int_{\T}\frac{v}{h^3},\\
(AB)''(h)\cdot (v,v)&=& B(h)\displaystyle\int_{\T} (v^2-v'^2)-2\displaystyle\int vd(h''+h) \displaystyle\int_{\T}\frac{v}{h^3}+3A(h)\label{eq:J''}\displaystyle{\int_{\T} \frac{v^2}{h^4}},
\end{eqnarray*}
where $h''+h$ is a nonnegative Radon measure on $\T$.
The following local concavity estimate follows:
$$(AB)''(h)\cdot (v,v)\leq \left(B(h)+3A(h)\|1/h^4\|_{L^\infty(\T)}\right)\|v\|_{L^2(\T)}^2+2(h''+h)(\T)\|1/h^3\|_{L^\infty(\T)}\|v\|_{L^\infty(\T)}\|v\|_{L^1(\T)}-B(h)|v|_{H^1(\T)}^2,$$
where $(h''+h)(\T)$ is the total mass of the measure $h''+h$. This leads to the result with $\alpha=B(h)>0$, since $\|v\|_{L^2(\T)}^2\leq\|v\|_{L^\infty(\T)}\|v\|_{L^1(\T)}$.\qed
\end{proof}
\begin{remark}
One can also conclude that 
$$\forall v\in\C^\infty(\T)\textrm{ such that }(AB)'(h)\cdot v =0, \quad(AB)''(h)\cdot(v,v)\leq C\|v\|_{L^2(\T)}^2-\alpha|v|_{H^1(\T)}^2,$$
since $(AB)'(h)\cdot v =0$ implies that the middle term $2(A'(h)\cdot v )(B'(h)\cdot v)$ is nonpositive.
\end{remark}

\subsection{Proof of Theorem \ref{th:dim2}}\label{ssect:proofdim2}

We now focus on the proof of Theorem \ref{th:dim2} about local minimizers of the Mahler-volume in $\R^2$.
In comparison with Theorem \ref{th:dimd} which follows directly from Proposition \ref{prop:conc}, Theorem \ref{th:dim2} is no longer an easy consequence of Proposition \ref{prop:conc2}, so we give a detailed proof here.\\

Let $K$ be a local minimizer of the Mahler volume among symmetric convex bodies. 
 So $h_{0}=h_{K}$ is solution of the following problem:

\begin{equation}\label{eq:mahlersym2d}
J(h_{0})=\min\left\{\;J(h):=A(h)B(h), \;h:\T\to(0,\infty),\textrm{ such that }\;h''+h\geq 0 \textrm{ and } \;\forall\;\theta\in\T,\; h(\theta)=h(\theta+\pi)\;\right\},
\end{equation}
with the same notation as in the previous section:
$$A(h)=\frac{1}{2}\int_{\mathbb{T}}(h^2-h'^2)d\theta, \quad B(h)=\int_{\mathbb{T}}\frac{1}{2h^2}d\theta.$$
\begin{remark}\label{rem:local}
By ``local'', we mean 
that $K$ is minimal among all convex sets whose support function is close to that 
of $K$ in the $H^1$-norm.
More precisely, we say that $K$ is a local minimizer of the Mahler volume among symmetric convex bodies if there exists $\eps>0$ such that
\begin{equation}\label{eq:sym}
\forall L\textrm{ convex symmetric body such that }\|h_{L}-h_{K}\|_{H^1(\T)}\leq\eps,\quad M(K)\leq M(L).
\end{equation}
Another useful distance is the Hausdorff distance, 
expressible through the support functions by $\|h_{L}-h_{K}\|_{L^\infty(\T)}$.
It is an easy consequence of the Poincar\'e inequality that
the Hausdorff distance is bounded above by the $H^1$-distance, up to an universal constant (see \cite{AW} for example).

The converse inequality is not clear, but one can prove that the convergence in the sense of Hausdorff implies the convergence in the $H^1$-distance, and so there is topological equivalence. We give a short sketch of proof of this last property:

if $h_{n},h_{\infty}$ are such that $h_{n}''+h_{n}\geq 0, h_{\infty}''+h_{\infty}\geq 0,$ and $h_{n}\to h_{\infty}$ in $L^\infty(\T)$, then it is easy to see that $h_{n}$ is bounded in $W^{1,\infty}(\T)$ by a constant $C$ (see for example \cite[Lemma 4.1]{LN}), and therefore that
$$\int_{\T}d|h_{n}''|\leq \int_{\T}d(h_{n}''+h_{n})+\int_{\T}d|h_{n}|\leq 2\int_{\T}d|h_{n}|\leq 2C.$$

\noindent
Therefore $h_{n}'$ is bounded in $BV(\T)$, so up to a subsequence, $h_{n}'\to h'$ a.e. and in $L^1(\T)$ (by the compact imbedding of $BV(\T)$ in $L^1$). We conclude with the dominated convergence theorem that $h_{n}\to h_{\infty}$ in $H^1(\T)$, and by uniqueness 
of the accumulation point of $h_{n}$ that the whole sequence converges.
\end{remark}

\noindent{\bf $\bullet$ First step: Any local minimal set is a polygon}

We cannot 
directly apply Theorem 2.1 from \cite{LN}, since our functional is not exactly of the type of the ones considered there, and also because the constraints are slightly different, but one can follow the same argument, as is done
in the following lines.\\

Assume for the purpose of contradiction that {$K^*$, a local minimizer, is not a polygon and let $h_{0}$ denote} its support function.  Then there must exist an accumulation point $\theta_0$ of
$\textrm{supp}(h_{0}''+h_{0})$.

Without loss of generality we may assume that
$\theta_0=0$ and also that there exists a decreasing sequence $(\eps_n)$ tending to $0$ such that
$\textrm{supp}(h_{0}''+h_{0})\cap(0,\eps_n)\neq \emptyset$. As in \cite{LN} we follow an idea of T. Lachand-Robert
and M.A. Peletier (see \cite{LP01New}): for any $n\in\N$, we choose $0<\eps_n^i<\eps_n$, $i\in\llbracket1,4\rrbracket$, increasing with respect to $i$, such that $\textrm{supp}(h_{0}''+h_{0})\cap(\eps_n^i,\eps_n^{i+1})\neq\emptyset$, $i=1,3$. We consider $v_{n,i}$ solving
\[
 v_{n,i}''+v_{n,i}=\chi_{(\eps_n^i,\eps_n^{i+1})}(h_0''+h_0),\quad v_{n,i}=0 \mbox{ in }
(0,\eps_n)^c,\; i=1,3.
\]
Such $v_{n,i}$ exist since 
$\eps_n^i$ have been chosen so as
to avoid the spectrum of the Laplace operator with Dirichlet boundary conditions.
Next, we look for $\lambda_{n,i},\ i=1,3$ such that
${\displaystyle v_n=\sum_{i=1,3}\lambda_{n,i} v_{n,i}}$ satisfy
\[
v'_n(0^+)=v'_n(\eps_n^-)=0.
\]

The above derivatives exist since $v_{n,i}$ are regular near $0$ and $\eps_n$ in $(0,\eps_n)$.
We can always find such $\lambda_{n,i}$, as they satisfy two linear equations.
This implies
that $v_n''$ does not have any Dirac mass at $0$ and $\eps_n$, and therefore, $h+tv_n$ is the support function of a convex set, for $|t|$ small enough 
($n$ now being fixed).
We define the symmetric version of $v_{n}$: 
\begin{equation}\label{eq:sym2d}
\widetilde{v_{n}}(\theta)=\left\{\begin{array}{l}
v_{n}(\theta)\textrm{ if }\theta\in (0,\pi),\\
v_{n}(-\theta)\textrm{ if }\theta\in (-\pi,0),\\
0\textrm{ otherwise.}
\end{array}
\right.
\end{equation}
Therefore $h+t\widetilde{v_{n}}$ is an admissible function for \eqref{eq:mahlersym2d}.

So the second-order optimality condition yields
\begin{equation*}
0\leq
J''(h_{0})\cdot(\widetilde{v_{n}},\widetilde{v_{n}})\leq C\|\widetilde{v_{n}}\|_{L^\infty(\T)}\|\widetilde{v_{n}}\|_{L^1(\T)}-\alpha|\widetilde{v_{n}}|_{H^1(\T)}^2\leq 2(C\eps_{n}^2-\alpha)|v_{n}|_{H^1(\T)}^2
\end{equation*}
using Proposition \ref{prop:conc2} and the Poincar\'e inequality 
$$\forall\; v\in H^1(\T)\textrm{ such that }\textrm{supp}(v)\subset[0,\eps],\forall x\in[0,\eps],\; |v(x)| \leq \sqrt{\eps}|v|_{H^1(\T),}$$ with
$\eps=\eps_n$.\\
As $\eps_n$ tends to $0$, this inequality becomes impossible, which
proves that $\textrm{supp}(h_{0}''+h_{0})$ has no accumulation points.
It follows that $h_0''+h_0$ is a sum of positive Dirac masses,
which is to say that $K^*$ is a polygon.
\begin{remark}
It is easy to see that a similar argument applies in the nonsymmetric case.
\end{remark}

\noindent{\bf $\bullet$ Step 2: Another expression for $B$ and its derivatives:}\\

Since $K$ is a polygon,
\begin{equation}\label{eq:h''+h}
   h_{0}''+h_{0}=\sum_{i=0}^{2N-1}a_i\delta_{\te_i} \textrm{ for some }N\in\N^*, \te_{i}\in \T\textrm{ and }a_{i}>0.
\end{equation}
 We want to prove that $K$ is a parallelogram, that is to say $N=2$ in \eqref{eq:h''+h}.

As in the previous step, we would like to find a perturbation $v$ such that $J''(h_{0})\cdot(v,v) <0$, which would be a contradiction. So that $h_{0}+tv$ remains admissible for all small $t$, we need $v''+v$ to be supported within the support of $h_{0}''+h_{0}$.

Again we shall symmetrize the perturbation $v\in H^1_{0}(0,\pi)$ with \eqref{eq:sym2d}, and we easily prove 
$J'(h_{0})\cdot \widetilde{v}=2J'(h_{0})\cdot v$ since $h_{0}$ is symmetric, and
\begin{equation}\label{eq:AB''sym}
J''(h_{0})\cdot(\tilde{v},\tilde{v})=2A''(h_{0})\cdot(v,v)B(h_{0})+8A'(h_{0})\cdot(v)B'(h_{0})\cdot(v)+2B''(h_{0})\cdot(v,v)A(h_{0}).
\end{equation}

Since the expression for $B$ is not very tractable from the
geometric point of view, 
we would like to rewrite $B$ and its derivatives when one knows that $h$ is the support function of a polygon, and that $v$ is a deformation such that $v''+v$ is supported within the 
discrete set on which
$h''+h$ is nonzero.

Let us denote by $A_i, i=0\ldots 2N-1$ the vertices of $K$. Then
the support function $h$ is defined by
\begin{equation*}
   h(\te)=\rho_i \cos(\te - \alpha_i) \quad\mbox{for } \te\in
   (\te_i,\te_{i+1}),
\end{equation*}
where $\rho_i=OA_i, \alpha_i=(\overrightarrow{e_1},
\overrightarrow{OA_i})$ and $(\te_i,\te_{i+1})$ are the two angles
of the normal vectors of sides adjacent to $A_i$. Therefore
\begin{eqnarray}\label{8}
   B(h)&=&\int_0^{2\pi} \frac{d\te}{2h^2(\te)}=\sum_{i=0}^{2N-1}
\int_{\te_i}^{\te_{i+1}} \frac{d\te}{2\rho_i^2 \cos^2(\te -
\alpha_i)}\nonumber\\
&=&\sum_{i=0}^{2N-1}
\frac{1}{2\rho_i^2}\left.\tan(\te-\alpha_i)\right|_{\te=\te_i}^{\te=\te_{i+1}}
=\sum_{i=0}^{2N-1} \frac{\sin(\te_{i+1}-\te_i)}{2h(\te_i)h(\te_{i+1})}.
\end{eqnarray}
Now, when we replace $h$ by $h+tv$ where $v''+v=\sum_{i}\beta_{i}\delta_{\te_{i}}$, the angles of the new polygon
are unchanged, 
because $(h+tv)''+(h+tv)$ is a sum of nonnegative Dirac masses at
the same points, when $t$ is small enough. Thus we can compute the first and second derivative
of $B(h)$ using formula (\ref{8}), obtaining:
\begin{equation*}
   B'(h)\cdot v=-\sum_{i=0}^{2N-1} \frac{\sin(\te_{i+1}-\te_i)}{2h(\te_i)h(\te_{i+1})}
   \left[\frac{v(\te_i)}{h(\te_i)}+\frac{v(\te_{i+1})}{h(\te_{i+1})}\right]=
   -\sum_{i=0}^{2N-1} \left[\frac{\sin(\te_{i+1}-\te_i)}{h(\te_{i+1})}+\frac{\sin(\te_{i}-\te_{i-1})}{h(\te_{i-1})}\right]
 \frac{v(\te_i)}{h^2(\te_i)}
\end{equation*}
and
\begin{equation}\label{eq:B''}
B''(h)\cdot(v,v)=\sum_{i=0}^{2N-1}
\frac{\sin(\te_{i+1}-\te_i)}{h(\te_i)h(\te_{i+1})}
   \left[\frac{v^2(\te_i)}{h^2(\te_i)}+\frac{v^2(\te_{i+1})}{h^2(\te_{i+1})}
   +\frac{v(\te_i)v(\te_{i+1})}{h(\te_i)h(\te_{i+1})}\right]\,.
\end{equation}
Therefore the first optimality condition becomes:
$A(h)B'(h)\cdot v+B(h)A'(h)\cdot v=0$ for any $v$ symmetric ({\it i.e.}, for any $v(\te_i), i\in\llbracket 0,N-1\rrbracket$), and we
get:
\begin{equation}\label{11}
B(h)a_i-\frac{A(h)}{2h^2(\te_i)}\,\left(\frac{\sin(\te_{i+1}-\te_i)}{h(\te_{i+1})}
+ \frac{\sin(\te_{i}-\te_{i-1})}{h(\te_{i-1})}\right)=0\quad
\mbox{for } i=0,\ldots N-1\,.
\end{equation}

\noindent{\bf$\bullet$ Step 3: Optimality conditions for a simple deformation:}\\

We choose $v$ such that $v''+v=\alpha\delta_{\te_{1}}$ and $v\in H^1_{0}(\te_{0},\te_{2})$.
Therefore equations \eqref{11}, \eqref{eq:J''}, \eqref{eq:AB''sym} and \eqref{eq:B''} give
\begin{eqnarray}
J''(h)\cdot(\tilde{v},\tilde{v})&=&2B(h)\displaystyle\int vd(v''+v)-8\frac{B(h)}{A(h)}\left(\displaystyle\int vd(h''+h)\right)^2+2A(h)
\left[\frac{\sin(\te_{2}-\te_1)}{h(\te_{2})}+\frac{\sin(\te_{1}-\te_0)}{h(\te_{0})}\right]
\frac{v^2(\te_1)}{h^3(\te_1)}\nonumber\\\nonumber
&=&2B(h)\alpha v(\te_{1})-8\frac{B(h)}{A(h)}(a_{1}v(\te_{1}))^2+2A(h)
\left[ \frac{2B(h)h(\te_{1})^2a_{1}}{A(h)}\right]
\frac{v^2(\te_1)}{h^3(\te_1)}\\\label{eq:J''bis}
&=&2B(h)\left[-\frac{\sin(\te_{2}-\te_{0})}{\sin(\te_{2}-\te_{1})sin(\te_{1}-\te_{0})}-4\frac{a_{1}^2}{A(h)}+2
\frac{a_{1}}{h(\te_{1})}\right]v^2(\te_{1}),
\end{eqnarray}
where the last equality is obtained because a straightforward calculation gives 
$\alpha=-\frac{\sin(\te_{2}-\te_{0})}{\sin(\te_{2}-\te_{1})\sin(\te_{1}-\te_{0})}v(\te_{1})$.\\

\noindent{\bf$\bullet$ Step 4: Conclusion}\\

Let us assume, for a contradiction, that $K$ has at least
6 sides. Let $\te_0, \te_1, \te_2$ be the three first angles
of the normal, in such a way that the support function of $K$ satisfies
\begin{equation*}
   h''+h=a_0\delta_{\te_0} + a_1\delta_{\te_1} + a_2\delta_{\te_2}
   + \ldots,
\end{equation*}
and $\te_{2}-\te_{0}<\pi$.\\
%Using affine invariance, one can choose $\te_{0},\te_{1}-\te_{0}$ and
%$\te_{2}-\te_{1}$ the way we want, subject to the restrictions $0<\te_{1}-\te_{0}<\te_{2}-\te_{0}<\pi$.
%
We recall that the Mahler functional is invariant by affine transformation. Therefore, if $K$ is a local minimizer, the image of $K$ by such a transformation $T$ remains a local minimizer of $J$, since the neighbors of $K$ are transformed in neighbors of $T(K)$ by $T$. By a small abuse, we keep the notation $h$ as the support function of $T(K)$. This allows us to study the sign of \eqref{eq:J''bis} after a suitable transformation.

Using affine invariance, one can choose $\te_{0}=0$,
and $\te_{1}-\te_{0}=\pi/2$, which ensures that the polygon is contained in a rectangle of
sides $2 h(\theta_0), 2 h(\theta_1)$.  With a further scaling we 
arrange that $h(\theta_0) = h(\theta_1) = 1$ and choose an orientation so that 
$a_1 \le a_0$, see Figure 1.  Under these conditions $A < 4$ (equality would imply the square, excluded 
by hypothesis),
$\tan(\theta_2) < 0$, and a 
trigonometrical calculation shows that

$$
\left|\tan(\theta_2)\right| \ge \frac{2 - a_1}{2 - a_0}.
$$

%%%%HERE FIGURE
\begin{figure}[hbt]
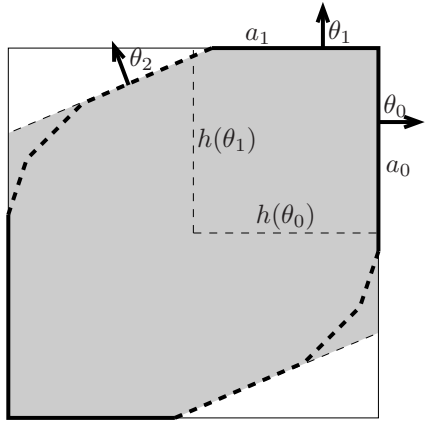

\figinit{0.7pt}
% pt de la courbe A
%cercle 1
\figpt 0:(0,0)
%rectangle
\figpt 1:(100,100)
\figpt 2:(-100,100)
\figpt 3:(-100,-100)
\figpt 4:(100,-100)

%Polygone
\figpt 5:(100,-10)
\figpt 6:(100,100)
\figpt 7:(10,100)
\figpt 8:(-60,70)
\figpt 9:(-90,40)
\figpt 10:(-100,10)

\figpt 11:(-100,-100)
\figpt 12:(-10,-100)
\figpt 13:(60,-70)
\figpt 14:(90,-40)

%longueurs
\figpt 15:(100,0)
\figpt 16:(0,100)

%Lettres
\figpt 17:(50,0)
\figpt 18:(0,50)
\figpt 19:(35,100)
\figpt 30:(-28,87)
\figpt 31:(100,35)

%vecteurs
\figpt 20:(100,60)
\figpt 21:(125,60)
\figpt 22:(70,100)
\figpt 23:(70,125)
\figpt 24:(70,100)
\figpt 25:(70,130)
\figpt 26:(-35,80)
\figpt 27:(-44,104)

%parallelogramme

\figpt 28:(100,-54)
\figpt 29:(-100,54)

%%%%%%%%%%%%%%%%%%%%%
%% fichier graphique
%%%%%%%%%%%%%%%%%%%
\psbeginfig{}

%\psset(dash=2)
\psset(fillmode=yes,color=0.8)
\psset(width=0.3)
\psline[1,2,3,4,1]
\psset(fillmode=yes,color=1)
\psline[7,29,2]
\psline[12,28,4]

\psset(fillmode=no,color=0)
\psset(dash=1)
\psset(width=0.3)
\psline[1,2,3,4,1]
\psset(width=1.5)
\psline[5,6,7]
\psline[10,11,12]
\psarrow[20,21]
\psarrow[22,23]
\psarrow[26,27]
\psset(dash=8)
\psline[7,8,9,10]
\psline[12,13,14,5]
\psset(width=0.3)
\psline[0,15]
\psline[0,16]
\psline[7,29]
\psline[12,28]

\psendfig
%%%%%%%%%%%%%%%%%%%
%% writing
%%%%%%%%%%%%%%%%%%%
\figvisu{\figBoxA}{}{
\figwriten 17:{$h(\theta_{0})$}(2)
\figwritee 18:{$h(\theta_{1})$}(2)
\figwriten 19:{$a_{1}$}(2)
\figwritene 20:{$\theta_{0}$}(4)
\figwritene 22:{$\theta_{1}$}(4)
\figwriten 30:{$\theta_{2}$}(1)
\figwritee 31:{$a_{0}$}(4)
}
\centerline{\box\figBoxA}
\caption{
Estimate of \eqref{eq:J''bis} with $\theta_{0}=0, \theta_{1}=\pi/2, h(\te_{0})=h(\te_{1})=1, 0\leq a_{1}\leq a_{0}\leq 2.$
}
\end{figure}

\noindent
Therefore,
\begin{eqnarray*}\label{squarecalc}
-\frac{\sin(\te_{2}-\te_{0})}{\sin(\te_{2}-\te_{1})\sin(\te_{1}-\te_{0})}-4\frac{a_{1}^2}{A(h)}+2
\frac{a_{1}}{h(\te_{1})}
&<& \tan(\theta_{2})- a_1^2 + 2 a_1\\
&\leq& - \frac{2 - a_1}{2 - a_0} - a_1^2 + 2 a_1\\
&=& \frac{2 - a_1}{2 - a_0} \left(a_1(2 - a_0) - 1\right)
\end{eqnarray*}
The factor $\left(a_1(2 - a_0) - 1\right)$ is a harmonic function, negative on the
edges of the triangle $\left\{0 \le a_1 \le a_0 \le 2\right\}$ except 
when $a_1 = a_0 = 1$, where it equals $0$. By the maximum principle it is always 
nonpositive in this triangle.
Observing that the inequality in 
the first line is strict, we conclude that
$J''(h)\cdot(\tilde{v},\tilde{v})<0$.  This contradicts local optimality in the
sense of the $H^1$-distance and concludes the proof of Theorem \ref{th:dim2}.\qed

\begin{remark}
Simple calculations show that this first-order optimality conditions \eqref{11} is satisfied
by any regular symmetric polygon. This explains why we need to analyze the 
second-order condition to get the conclusion.
\end{remark}

\begin{remark}
The invariance of the Mahler functional under affine transformation cannot be simply expressed with the first and second derivatives of $J$, because the support function of $T(K)$ cannot be simply deduced from the support function of $K$. Nevertheless, we can prove that the quantity in \eqref{eq:J''bis} keeps a constant sign under affine transformation.
\end{remark}

\bibliographystyle{plain}

\end{document}